\documentclass[12pt,amsmath,amscd,amsbsy,amssymb]{amsart}
\usepackage{graphicx}

\theoremstyle{plain}
\newtheorem{theorem}{Theorem}[section]
\newtheorem{lemma}[theorem]{Lemma}
\newtheorem{prop}{Proposition}[section]

\newtheorem{rem}[theorem]{Remark}
\newtheorem{defin}[theorem]{Definition}
\usepackage[all]{xy}
\begin{document}

	\author{Natalia Mazurenko}
	\email{mnatali@ukr.net}
	\address{Department of Mathematics and Computer Science, Vasyl Stefanyk Precarpathian National University,
		57 Shevchenka str., Ivano-Frankivsk, 76025, Ukraine}

	\author{Mykhailo Zarichnyi}
	\email{zarichnyi@yahoo.com}
	\address{Institute of Mathematics, Faculty of Exact and Technical Sciences, University of Rzesz\'ow, Pigonia 1, 35-310 Rzesz\'ow, Poland}

	\title[Invariant idempotent $\ast$-measures]{Invariant idempotent $\ast$-measures for generalized iterated function systems}

	\begin{abstract} The notion of $\ast$-measure on a compact Hausdorff space can be defined for arbitrary continuous triangular norm $\ast$. The well-known Hutchinson-Barnsley theory deals with the iterated function systems (IFSs) of probability measures and establishes existence and uniqueness of invariant measures. 
		
		In the previous paper, IFSs of  $\ast$-measures were considered. In the present paper we deal with  generalized invariant function systems (GIFSs) of $\ast$-measures, which are counterparts of GIFSs in the sense of Mihail and Miculescu. The notion of invariant $\ast$-measure is introduced for such  GIFSs and we prove existence and uniqueness of such elements.

	\end{abstract}

	\keywords{triangular norms, non-additive measure, invariant measure, iterated function system}
	%\shortAuthorsList{N. Mazurenko}
	%\udc{515.12}
	\subjclass{28A80, 28A33, 54B20, 54B30}

	\maketitle

	\section{Introduction}
	
	The notion of $\ast$-measure, for any triangular norm $\ast$ (see the definition below), is introduced in \cite{Su}. The functor $M^\ast$ of $\ast$-measures acting in the category $\mathbf{Comp}$ of compact Hausdorff spaces and continuous mappings turns out to be isomorphic to the functor of idempotent measures defined in \cite{Z} (see \cite{MSZ}). Idempotent measures are analogues of probability measures in idempotent mathematics (this is the name of the branch of mathematics where the role of the field of real numbers is played by idempotent semi-rings; see, e.g., \cite{KM}). For idempotent measures, one can establish analogues of various results valid for probability measures. In particular, one can prove the existence of invariant idempotent measures for iterated function systems  (IFSs) in full accordance with Hutchinson's results for probability measures; see \cite{MZ}.

	It is also proved in \cite{Su} that the functor $M^\ast$ is the functorial part of a monad in the category $\mathbf{Comp}$. The monad structure is a natural framework for defining the notion of invariant $\ast$-measure for IFSs in metric spaces. The existence and uniqueness of invariant  $\ast$-measures for some IFSs is proved in \cite{MSZ}.
	
	In \cite{MM}, Mihail and Miculescu extended the notion of iterated function system  by considering maps from $X^m$ to $X$, for  metric spaces $X$. For such generalized IFSs (GIFSs) they defined the notion of invariant element and proved, under certain natural conditions, its existence and uniqueness.

	In \cite{Str2}, the probabilistic GIFSs were investigated. It was proved therein that there exists an invariant Hutchinson measure for any such GIFS, i.e., a measure $\mu\in P(X)$ satisfying
	$$\mu=\sum_{i=1}^np_iP(g_i)(\mu\otimes\dots\otimes\mu)$$ 
	(by $P$ we denote the functor of probability measures).
	
	The idempotent counterpart of the probabilistic GIFSs is considered in \cite{COS1}. It  is remarked therein that the used technique can be adopted for idempotent GIFSs and, in particular, the existence and uniqueness theorem for invariant idempotent measures can be proved for these GIFSs.
	
In the present paper we extend results from \cite{MSZ} over GIFSs of $\ast$-measures. Our main result is the existence and uniqueness of invariant $\ast$-measure. Moreover, we  modify the notion of GIFS by considering the maps from the symmetric powers of a space $X$ into $X$.

	There exists a tight connection between our results and results from \cite{OS}. Actually, it is proved in \cite{OS} that there is a homeomorphism between the space $M^\ast(X)$ and the subspace $\bar M(X)$ of the hyperspace of $X\times[0,1]$ consisting of set satisfying some natural properties. It turns out that these sets are exactly the hypographs of some normal upper semicontinuous functions (see below). The latter can be regarded as (upper semicontinuous) fuzzy sets on $X$. Note that an extension of the notion of invariant set on the fuzzy set theory is considered in \cite{CFMV}. The connections of idempotent measures with fuzzy sets are revealed in \cite{OS}, at the same time allowing us to describe $\ast$-measures of fuzzy sets with a grey scale and to derive (some of) our results from the results of the article  \cite{OS}.
	
	Let us note some features of our approach:
	
	1) Our proof of the existence and uniqueness of invariant $\ast$-measures does not rely on metrization of the space of measures and is relatively simple.
	
	2) In generalizations of IFSs and we go even further than \cite{MM} and consider, instead of mappings $X^m\to X$,  mappings of the form $F(X)\to X$, where $F$ is some functorial construction in the category of compact sets. A wide class of examples of such constructions is given by the theory of normal and similar functors of finite degree (see, e.g., \cite{TZ}).
	
	3) We consistently use the formalism of category theory, in particular the notion of monad. This allows us to see commonality in the notions of GIFSs for compact sets, probability measures, idempotent measures, $\ast$-measures, and to develop more or less general definition of invariant object (attractor), which allows also to consider infinite (compact) families of functions.

	\section{Preliminaries}
	
	We assume that the reader is familiar with the language of the category theory. Necessary information can be found, e.g., in \cite{BW}. By ``functor'' we mean ``covariant functor''. Recall that, given categories $\mathcal C,\mathcal D$ and functors $F_1,F_2\colon \mathcal C\to\mathcal D$, a natural transformation $\alpha$ of $F_1$ to $F_2$ (written $\alpha\colon F_1\to F_2$) consists of morphisms $\alpha_X\colon F_1(X)\to F_2(X)$, given for every object $X$ of $\mathcal C$ such that, for any morphism $f\colon X\to Y$ in $\mathcal C$, the diagram
	$$\xymatrix{F_1(X)\ar[r]^{F_1(f)}\ar[d]_{\alpha_X}&F_1(Y) \ar[d]^{\alpha_Y}\\ 
	F_2(X)\ar[r]_{F_2(f)}&F_2(Y)}$$ 
	is commutative. 
	
	Let $\mathcal C$ be a category. Given a functor $T\colon \mathcal C\to\mathcal C$ and $n\in\mathbb N$, we denote by $T^n$ the $n$-th iteration of $T$, $T^2=TT$, $T^3=TT^2=TTT$, \dots
	
	A monad on $\mathcal C$ is a triple $\mathbb T=(T,\eta,\mu)$ which consists of a functor $T\colon \mathcal C\to \mathcal C$ and natural transformations $\eta\colon 1_{\mathcal C}\to T$, $\mu\colon T^2\to T$ such that the diagrams
	
	$$\xymatrix{T\ar[r]^{\eta_T} \ar[rd]_{1}&T^2\ar[d]^\mu & T\ar[l]_{T(\eta)}\ar[ld]^1\\ &T&} \text{ and } \xymatrix{T^3\ar[r]^{T(\mu)}\ar[d]_{\mu_T}& T^2\ar[d]^{\mu}\\ T^2\ar[r]_{\mu} & T}$$
	are commutative (see, e.g., \cite{BW}).
	
	\subsection{$\ast$-measures}
	
	A triangular norm (t-norm) is a continuous, associative, commutative and monotonic binary operation on the unit segment $\mathbb I=[0,1]$ for which 1 is a unit  (see, e.g., \cite{KMP}). The following are examples of triangular norms:
	\begin{enumerate}
		\item (multiplication) $(a,b)\mapsto a\cdot b$;
		\item (minimum) $(a,b)\mapsto \min\{a,b\}=a\wedge b$;
		\item (\L ukasiewicz t-norm) $(a,b)\mapsto \max\{a+b-1,0\} =a\diamond b$.
	\end{enumerate}

	Let $X$ be a compact Hausdorff space. By $C(X,\mathbb I)$ we denote the set of all continuous functions from $X$ to $\mathbb I$. Given $c\in \mathbb I$, we denote by $c_X\in C(X,\mathbb I)$ the constant function with the value $c$. A functional $\mu\colon C(X,\mathbb I)\to\mathbb I$ is called an idempotent $\ast$-measure \cite{Su} if
	
	\begin{enumerate}
		\item $\mu(c_X)=c$;
		\item $\mu (\lambda\ast \varphi)=\lambda\ast \mu (\varphi)$;
		\item $\mu (\varphi\vee \psi)=\mu(\varphi)\vee\mu(\psi)$,
	\end{enumerate}
	for all $c\in\mathbb I$ and $\varphi,\psi\in C(X,\mathbb I)$. (Hereafter $\vee$ denotes the maximum.)
	
	Let $M^\ast(X)$ denote the set of all $\ast$-measures on $X$. 
	
	Given $\mu\in M^\ast(X)$, $\varphi_1,\dots,\varphi_n\in C(X,\mathbb I)$ and $\varepsilon>0$, we define  $$O\langle\mu;\varphi_1,\dots,\varphi_n;\varepsilon\rangle=\{\nu\in M^\ast(X)\mid |\mu(\varphi_i)-\nu(\varphi_i)|<\varepsilon,\ i=1,\dots,n\}.$$

	Then the sets of the form $O\langle\mu;\varphi_1,\dots,\varphi_n;\varepsilon\rangle$, where $\mu\in M^\ast(X)$, $\varphi_1,\dots,\varphi_n\in C(X,\mathbb I)$, $n\in \mathbb N$ and $\varepsilon>0$, comprise a base of the weak* topology on $M^\ast(X)$.
	
	Note that $M^\ast$ is a covariant endofunctor in the category of compact Hausdorff spaces. Given a continuous map $f\colon X\to Y$ of compact Hausdorff spaces, define $M^\ast(f)\colon M^\ast(X)\to  M^\ast(Y)$ by the formula $M^\ast(f)(\mu)(\varphi)=\mu(\varphi f)$, for any  $\mu \in M^\ast(X)$ and $\varphi\in   C(X,\mathbb I)$.

	The functor $M^\ast$ is the functorial part of a monad in the category of compact Hausdorff spaces. See, e.g., \cite{BW} for the definition of monad. The natural transformations  $\eta^\ast\colon 1_{\mathbf{Comp}}\to M^\ast$ by the condition $\eta^\ast_X(x)=\delta_x\in M^\ast(X)$, $x\in X$.

	Given $\varphi\in C(X,\mathbb I)$, define $\bar\varphi\colon M^\ast(X)\to\mathbb I$ by the formula: $\bar\varphi(\mu)=\mu(\varphi)$, $\mu\in M^\ast(X)$. First, note that $\bar\varphi\in C(M^\ast(X),\mathbb I)$.
	
	Given $\mathcal M\in M^{\ast2}(X)$, define $\zeta^\ast_X(\mathcal M)\colon C(X,\mathbb I)\to \mathbb I$ by the formula $\zeta^\ast_X(\mathcal M)(\varphi)=\mathcal M(\bar\varphi)$. Note that $\zeta^\ast_X(\mathcal M)\in M^\ast(X)$ (see, e.g., \cite{SZ}).
	
The monad structure allows us to define the notion of (tensor) product of $\ast$-measures as follows. Let $\mu\in M^\ast(X)$, $\nu\in M^\ast(Y)$, where $X,Y$ are compact Hausdorff spaces. For every $x\in X$, let $f_x\colon Y\to X\times Y$ denote the map $y\mapsto (x,y)$. Then, for $y\in Y$, let $g_y\colon X\to M^\ast(X\times Y)$ denote the map $x\mapsto M^\ast (f_x)(\nu)$. Finally, let $$\mu\otimes \nu=\zeta^\ast_{X\times Y}M^\ast(g_y)(\mu).$$

By induction, given $\mu_i\in X_i$, $i=1,\dots,m$, one can define $$\mu_1\otimes\dots\otimes\mu_m=(((\mu_1\otimes\mu_2)\otimes\mu_3)\otimes\dots)$$

	\subsection{Hyperspace representation} Let $X$ be a compact Hausdorff space. By $\exp X$ we denote the set of all nonempty compact subsets in $X$. Given $U_1,\dots,U_n\subset X$, let $$\langle U_1,\dots,U_n\rangle=\{A\in \exp X\mid A\subset \cup_{i=1}^nU_i,\ A\cap U_i\neq\emptyset,\ i=1,\dots,n\}.$$ The family $$\{\langle U_1,\dots,U_n\rangle\mid U_1,\dots,U_n\text{ are open},\ n\in\mathbb N\}$$ is known to be a base of the {\em Vietoris topology} on the set $\exp X$.
	The obtained topological space is called the {\em hyperspace} of $X$.
	
	If $(X,d)$ is a compact metric space, then the set $\exp X$ is endowed with the
	Hausdorff metric $d_H$,
	$$d_H(A,B)=\max \left\{\sup _{x\in A}\inf _{y\in B}d(x,y),\;\sup _{y\in B}\inf _{x\in A}d(x,y)\right\}.$$
	
	The Hausdorff metric is known to generate the Vietoris topology.
	
	In the following lemma from \cite{MSZ}, the sup metric is considered on the product of metric spaces.
	
	\begin{lemma}\label{prod} Let $X,Y$ be compact metric spaces,  $\mathrm{pr}_Y\colon X\times Y\to Y$ be the projections. Let $A,B\in\exp (X\times Y)$. If $\mathrm{pr}_Y(A)=\mathrm{pr}_Y(B)$, then $$d_H(A,B)\le\mathrm{diam}(X).$$
	\end{lemma}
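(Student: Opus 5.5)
Since $\mathrm{pr}_Y(A)=\mathrm{pr}_Y(B)$, every $y$ occurring as a second coordinate in $A$ also occurs as a second coordinate in $B$. With the sup metric $d((x,y),(x',y'))=\max\{d_X(x,x'),d_Y(y,y')\}$ on $X\times Y$, each point of $A$ therefore has a point of $B$ with the same $Y$-coordinate, at distance at most $\mathrm{diam}(X)$. By symmetry the same holds for $B$ relative to $A$, and the Hausdorff distance is bounded by $\mathrm{diam}(X)$.

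In detail, I will bound the two terms in the definition of $d_H$ separately; the second follows from the first by exchanging $A$ and $B$. Fix $(x,y)\in A$. Then $y\in\mathrm{pr}_Y(A)=\mathrm{pr}_Y(B)$, so there is $x'\in X$ with $(x',y)\in B$. The sup metric gives
$$d((x,y),(x',y))=\max\{d_X(x,x'),d_Y(y,y)\}=d_X(x,x')\le \mathrm{diam}(X).$$
Consequently
$$\inf_{b\in B}d((x,y),b)\le \mathrm{diam}(X).$$
Taking the supremum over $(x,y)\in A$ yields $\sup_{a\in A}\inf_{b\in B}d(a,b)\le\mathrm{diam}(X)$.

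The symmetric argument gives $\sup_{b\in B}\inf_{a\in A}d(a,b)\le\mathrm{diam}(X)$, and hence $d_H(A,B)\le\mathrm{diam}(X)$. Compactness of $A$ and $B$ is needed only so that $d_H$ is defined on $\exp(X\times Y)$.

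There is no substantial obstacle. The one point to check is that the sup metric, rather than the sum metric, is used on the product: this is what makes the $Y$-coordinate contribute $0$ and not add to $\mathrm{diam}(X)$.
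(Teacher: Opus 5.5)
Your argument is correct. The paper gives no proof of its own here, only quoting the lemma from \cite{MSZ} with the sup metric on the product as you assume, and your pointwise matching of $(x,y)\in A$ with some $(x',y)\in B$ is the natural proof. One small correction to your closing remark: sup versus sum metric is immaterial, since the matched points share the $Y$-coordinate and $d_Y(y,y)=0$ adds nothing under either metric, or indeed under any $\ell^p$-type product metric.
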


	Denote by $\bar M(X)$ the set of all $A\in\exp (X\times\mathbb I)$ satisfying the following conditions:
	\begin{enumerate}
		\item $A\cap (X\times\{1\})\neq\emptyset$;
		\item $X\times\{0\}\subset A$;
		\item $A$ is saturated, i.e., if $(x,t)\in A$, then $(x,s)\in A$ for every $s\in[0,t]$.
	\end{enumerate}
	
	Let $f\colon X\to Y$ be a map. Define the map $\bar M(f)\colon \bar M(X)\to\bar M(Y)$ by the formula:
	$$\bar M(f)(A)=\exp(f\times 1_{\mathbb I})(A)\cup (Y\times\{0\}).$$

	Actually, $\bar M$ is a functor in the category $\mathbf{Comp}$. It is proved in \cite{Su} that the functors $\bar M$ and $M^\ast$ are isomorphic for all triangular norms $\ast$. Actually, an isomorphism $h^\ast \colon \bar M\to M^\ast$ can be defined by the formula: $$h_X^\ast(A)(\phi)=\vee\{\phi(x)\ast t\mid (x,t)\in A\},\ A\in M^\ast(X),\phi\in C(X,\mathbb I).$$
	
	In the sequel, having in mind $h^\ast$ we use the notations $\bar M^\ast$ instead of $\bar M$. Note that $$ h_X^\ast(A\cup B)=h_X^\ast(A)\vee h_X^\ast(B),\ A,B\in \bar M^\ast(X),$$ and
	$$h_X^\ast(t\bar\ast A)=t\ast h_X^\ast( A),$$
	where $t\bar\ast A=\{(x,s\ast t)\mid (x,s)\in A\}$.
	
	The isomorphism $h^\ast$ of functors is actually an isomorphism of monads $\bar{\mathbb M}^\ast$ generated by $\bar M^\ast$ and $ M^\ast$. The natural transformations $\bar\eta\colon 1\to \bar M^\ast$ and $\bar\zeta\colon \bar M^{\ast2}\to \bar M^\ast$ are defined as follows:
	\begin{align*} \bar\eta_X^\ast(x)=& (X\times\{0\}) \cup(\{x\}\times\mathbb I),\  x\in X, \\
		\bar\zeta_X^\ast(\mathcal A) =&\{(x,t)\in X\times\mathbb I\mid \text{ there exist }A\in\exp (X\times\mathbb I),\ r,s\in\mathbb I\text{ such that } \\
		&(x,r)\in A,\ (A,s)\in\mathcal A, t=r\ast s\},\ \mathcal A\in \bar M^{\ast2}(X) 
		\end{align*}
	(see \cite{MSZ}).

	We denote by $h^\ast\colon M^\ast\to\bar M^\ast$ the isomorphism defined by the condition $$\mu(\phi)=\max\{\phi(x)\ast t\mid (x,t)\in h^\ast_X(\mu)\},$$
	$\phi\in C(X,\mathbb I)$, $\mu\in M^\ast(X)$.
	
	We will use the following notation: $\bar\mu=h^\ast_X(\mu)$, for every $\mu\in M^\ast(X)$. 
	
	We denote $$\bar\mu_1\bar\times\dots\bar\times\bar\mu_n=h^\ast_{X\times\dots\times X}(\mu_1\otimes\dots\otimes\mu_n).$$
	\begin{lemma}  \begin{align*}\bar\mu_1\bar\times\dots\bar\times\bar\mu_n=&\{(x_1,\dots,x_n;t)\in X\times\dots\times X\times\mathbb I\mid\\ & \text{ there exist }t_1,\dots,t_n\in\mathbb I  \text{ such that }t=t_1\ast\dots\ast t_n,\ (x_i,t_i)\in \bar\mu_i,\\ &i=1,\dots,n\}.\end{align*}
	\end{lemma}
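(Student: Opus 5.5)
Since the product is defined inductively, $\mu_1\otimes\dots\otimes\mu_n=(\mu_1\otimes\dots\otimes\mu_{n-1})\otimes\mu_n$, the proof goes by induction on $n$. Everything reduces to the case $n=2$, namely
$$\bar\mu\bar\times\bar\nu=\{(x,y;t)\mid t=r\ast s,\ (x,r)\in\bar\mu,\ (y,s)\in\bar\nu\},$$
for $\mu\in M^\ast(X)$ and $\nu\in M^\ast(Y)$. The inductive step is then immediate: apply the case $n=2$ to $X^{n-1}$ and $X$, substitute the description of $\bar\mu_1\bar\times\dots\bar\times\bar\mu_{n-1}$ given by the induction hypothesis, and use associativity of $\ast$ to write $t=(t_1\ast\dots\ast t_{n-1})\ast t_n$.

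For $n=2$ I would transport the definition $\mu\otimes\nu=\zeta^\ast_{X\times Y}M^\ast(g)(\mu)$, where $g(x)=M^\ast(f_x)(\nu)$, through the monad isomorphism $h^\ast$. Because $h^\ast$ is an isomorphism of monads, it commutes with the multiplication and the functor action, so
$$h^\ast(\mu\otimes\nu)=\bar\zeta^\ast_{X\times Y}\bigl(\bar M^\ast(\bar g)(\bar\mu)\bigr),$$
where $\bar g=h^\ast g\colon X\to\bar M^\ast(X\times Y)$. The ingredients are then computed from the formulas for $\bar M$:
\begin{itemize}
\item[(1)] $\bar g(x)=\bar M^\ast(f_x)(\bar\nu)=\{(x,y,s)\mid (y,s)\in\bar\nu\}\cup (X\times Y\times\{0\})$.
\item[(2)] $\bar M^\ast(\bar g)(\bar\mu)=\{(\bar g(x),r)\mid (x,r)\in\bar\mu\}\cup(\bar M^\ast(X\times Y)\times\{0\})$.
\item[(3)] By the formula for $\bar\zeta^\ast$, a point $(x',y,t)$ lies in the result iff there are $A$ and $s,r$ with $(x',y,s)\in A$, $(A,r)$ in the set from (2), and $t=s\ast r$.
\end{itemize}

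The two cases in (3) are handled as follows. If $(A,r)$ has $r>0$, or generally $A=\bar g(x)$ with $(x,r)\in\bar\mu$, then either $x'=x$ and $(y,s)\in\bar\nu$, which gives exactly a triple of the desired form, or $s=0$, which gives $t=0$. If instead $r=0$, then $t=0$. All the points with $t=0$ are already in the claimed set: choose $(x,0)\in\bar\mu$ and $(y,0)\in\bar\nu$, using $X\times\{0\}\subset\bar\mu$. Conversely, every point $(x,y,r\ast s)$ with $(x,r)\in\bar\mu$ and $(y,s)\in\bar\nu$ is obtained by taking $A=\bar g(x)$. Commutativity of $\ast$ reconciles the order of $r\ast s$ with $s\ast r$.

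The main obstacle is justifying that $h^\ast$ really intertwines $\zeta^\ast$ with $\bar\zeta^\ast$ and $M^\ast(g)$ with $\bar M^\ast(\bar g)$. The paper asserts this monad isomorphism, citing \cite{MSZ}, so I would take it as given. As a safety check, I would verify the $n=2$ case directly on functionals: for $\varphi\in C(X\times Y,\mathbb I)$, $(\mu\otimes\nu)(\varphi)=\mu\bigl(x\mapsto\nu(\varphi(x,\cdot))\bigr)=\max_{(x,r)\in\bar\mu}\max_{(y,s)\in\bar\nu}\varphi(x,y)\ast s\ast r$, using continuity and monotonicity of $\ast$ to interchange the max with $\ast r$. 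This equals $h^\ast$ of the claimed set evaluated at $\varphi$. The claimed set is compact, being a continuous image of $\bar\mu\times\bar\nu$, and it lies in $\bar M^\ast(X\times Y)$. Injectivity of $h^\ast$ then yields the equality.
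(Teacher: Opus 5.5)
Your proposal is correct and follows essentially the same route as the paper: reduce to $n=2$ by induction, transport $\zeta^\ast_{X\times Y}M^\ast(g)(\mu)$ through $h^\ast$, and compute $\bar M^\ast(f_x)(\bar\nu)$, $\bar M^\ast(\bar g)(\bar\mu)$ and $\bar\zeta^\ast$ explicitly. You are more careful than the paper in two ways: you keep the zero-level pieces $X\times Y\times\{0\}$ and $\bar M^\ast(X\times Y)\times\{0\}$ that the paper's formulas silently drop, and your functional check independently confirms the $n=2$ case using only that $h^\ast$ is a bijection.
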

	\begin{proof} It is sufficient to consider the case $n=2$, the general case will follow by induction.
		
		Let $\bar \mu\in \bar M^\ast(X)$, $\bar \nu\in \bar M^\ast(Y)$. Given $x\in X$, we obtain $f_x(y)=(x,y)\in X\times Y$ and, for every $\bar \nu\in \bar M^\ast(Y)$, we obtain $$g_{\bar\nu}(x)=\bar M^\ast(f_x)(\bar\nu)=\{(x,y,t_2)\mid (y,t_2)\in\bar\nu\}.$$
		
		Then \begin{align*}\bar M^\ast(g_{\bar\nu})(\bar\mu)=&\{(g_{\bar\nu}(x),t_1)\mid (x,t_1)\in\bar\mu\} \\ =& \{(\{(x,y,t_2)\mid (y,t_2)\in\bar\nu\},t_1)\mid (x,t_1)\in\bar\mu\}\end{align*}
		and, by the definition of $\bar\zeta^\ast$,
		$$\bar\mu\bar\times\bar\nu= \bar\zeta^\ast_{X\times Y}\bar M^\ast(g_{\bar\nu})(\bar\mu)=\{(x,y,t_1\ast t_2)\mid (x,t_1)\in\bar\mu, (y,t_2)\in\bar\nu \}.$$

	\end{proof}

	\subsection{Normal upper semicontinuous fuzzy sets}\label{ss:nusfs}

	There is a natural bijection between the set $\bar M(X)$ and the set of all normal upper semicontinuous (usc) functions considered in \cite{OS}. Recall that a function $u\colon X\to[0,1]$ is called normal if there is $x\in X$ such that $u(x)=1$, and is called usc if, for each $c \in \mathbb I$ the set $u^{-1}([c,1]) = \{x \in
		X | u(x) \ge c\}$ is closed. 
	
	We denote by $\mathcal F_X^{\bullet}$ the set of all normal usc functions.
	
	\begin{rem} In \cite{OS}, the notation $\mathcal F_X^{\ast}$ is used. We changed it to avoid confusion with the notation of t-norm.
		\end{rem} 
	
	The proof of the following statement is immediate.
	
	\begin{prop} The map $s_X\colon \mathcal F_X^{\bullet}\to \bar M(X)$, $s_X(u)=\{(x,t)\in X\times\mathbb I\mid t\le u(x)\}$, is a bijection. 
		\end{prop}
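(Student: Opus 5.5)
We prove the statement in three steps: $s_X$ lands in $\bar M(X)$, it is injective, and it is surjective. For surjectivity we write down the inverse map explicitly, namely $A\mapsto u_A$ with $u_A(x)=\max\{t\mid (x,t)\in A\}$.

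\emph{Well-definedness.} Let $u\in\mathcal F_X^{\bullet}$ and $A=s_X(u)$, the hypograph of $u$.
\begin{itemize}
\item Since $u\ge 0$, we get $X\times\{0\}\subset A$.
\item Normality gives a point $x$ with $u(x)=1$, so $(x,1)\in A$ and $A\cap(X\times\{1\})\neq\emptyset$.
\item Saturation is built into the definition, since $t\le u(x)$ and $s\le t$ imply $s\le u(x)$.
\end{itemize}
It remains to see that $A$ is compact, and since $X\times\mathbb I$ is compact it is enough to show $A$ is closed. Take $(x,t)\notin A$, so $t>u(x)$, and pick $c$ with $u(x)<c<t$. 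By upper semicontinuity $U=X\setminus u^{-1}([c,1])$ is an open neighbourhood of $x$. Then $U\times(c,1]$ is an open neighbourhood of $(x,t)$ disjoint from $A$, because every point $(y,s)$ in it satisfies $s>c>u(y)$. Hence $A\in\bar M(X)$.

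\emph{Injectivity.} For every $x$ we have $u(x)=\max\{t\mid (x,t)\in s_X(u)\}$, so $u$ is recovered from $s_X(u)$.

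\emph{Surjectivity.} Given $A\in\bar M(X)$, each vertical section $\{t\mid (x,t)\in A\}$ is nonempty because it contains $0$, and compact because $A$ is compact. So $u_A(x)=\max\{t\mid (x,t)\in A\}$ is well defined.
\begin{itemize}
\item By saturation the section at $x$ is exactly $[0,u_A(x)]$, which gives $s_X(u_A)=A$.
\item $u_A$ is normal, because $A$ meets $X\times\{1\}$.
\item $u_A$ is upper semicontinuous, because $u_A^{-1}([c,1])=\mathrm{pr}_X\bigl(A\cap(X\times[c,1])\bigr)$. This is the image of a compact set under a continuous map into a Hausdorff space, hence closed. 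The equality uses saturation once more: $u_A(x)\ge c$ holds iff $(x,c)\in A$, iff some $(x,t)\in A$ has $t\ge c$.
\end{itemize}

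Every step is routine. The only points that need care are the closedness of the hypograph and the projection argument for upper semicontinuity, and both follow directly from compactness.
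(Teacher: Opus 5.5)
Your proof is correct: the hypograph is closed by upper semicontinuity, the inverse is $A\mapsto u_A$ with $u_A(x)=\max\{t\mid (x,t)\in A\}$, and $u_A$ is upper semicontinuous via $u_A^{-1}([c,1])=\mathrm{pr}_X\bigl(A\cap(X\times[c,1])\bigr)$. The paper gives no argument and simply calls the statement immediate, so your verification supplies the routine check that it leaves to the reader.
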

	
  Given a morphism $T\colon X\to Y$ in the category $\mathbf{Comp}$, the map $T\colon \mathcal F_X^{\bullet}\to \mathcal F_Y^{\bullet}$ is defined by the formula $T(u)(y)=\sup\{u(x)\mid x\in u^{-1}(y)\}$ (convention: $\sup\emptyset=0$). Clearly, $1_X(u)=u$, and $(T_2T_1)(u)=T_2(T_1(u))$, for any morphisms $T_1\colon X\to Y$, $T_2\colon Y\to Z$ in  $\mathbf{Comp}$. This means that  $\mathcal F_X^{\bullet}$ can be regarded as a functor from the category $\mathbf{Comp}$ to the category of sets. The class of maps $s_X$, where $X$ is a compact Hausdorff space, comprices a natural transformation of the functor $\bar M$ into the functor $\mathcal F^{\bullet}$.

	\subsection{$G$-symmetric powers} Let $S_m$ denote the group of bijections of the set $\{1,\dots,m\}$. Any subgroup $G$ of $S_m$ acts on the power $X^m$ of a set $X$ by permutation of coordinates. The orbit space of this action is called the $G$-symmetric power of $X$ and is denoted by $SP^m_GX$. The orbit containing $(x_1,\dots,x_m)\in X^m$ will be denoted by $[x_1,\dots,x_m]_G$. The quotient map $(x_1,\dots,x_m)\mapsto[x_1,\dots,x_m]_G\colon X^m\to SP^m_GX $ will be denoted by $\pi_G$.
	
	Let $(X,d)$ be a metric space. The set $SP^m_GX$ will be endowed with the metric $\hat d$, $$\hat d([x_1,\dots,x_m]_G,[y_1,\dots,y_m]_G)=\min_{\sigma\in G}\max_{1\le i\le m}d(x_i,y_{\sigma(i)}).$$
	
	Note that if $G$ is the trivial group, then $SP^m_GX=X^m$. If $H$ is a subgroup of $G$, then by $\pi_{HG}$ we denote the natural map $$[x_1,\dots,x_m]_H\mapsto [x_1,\dots,x_m]_G\colon SP^m_HX\to SP^m_GX.$$
	
	If $X$ is a metric space, then the map  $\pi_{HG}$ is nonexpanding.

	Given $\mu\in M^\ast(X)$, for every subgroup $G$ of $S_m$, we let 
	$$[\mu\otimes\dots\otimes\mu]_G=M^\ast(\pi_G)(\mu\otimes\dots\otimes\mu).$$
	
		Clearly, $$M^\ast(\pi_{HG})([\mu\otimes\dots\otimes\mu]_H)=[\mu\otimes\dots\otimes\mu]_G.$$
	
	\subsection{Generalized Iterated Function Systems} 	 The power $X^m$ of a metric space $X$ is endowed with the maximum metric. A GIFS on $X$ of order $m$ is a finite set of maps $g_i\colon X^m\to X$, $i=1,\dots,n$. 
	
	We will need the following definition \cite{OS}. A map $f \colon X^m \to X$ is a generalized Matkowski contraction of degree $m$, if for some nondecreasing $\varphi\colon [0,\infty) \to  [0,\infty)$ with $\varphi^{(k)}(t)\to0$ for $t >0$ (here $\varphi^{(k)}$ is the $k$-th iteration of $\varphi$), it holds
	$$d(f (x),f (y)) \le \varphi(d(x,y)),\ x,y \in X^m.$$
	Here $\varphi$ is called a witness for $f$.
	
	If $X$ is compact, $f$ is a  generalized Matkowski contraction of degree $m$ if and only if the following condition holds:
	
	for every $t>0$ there exists $\alpha<1$ such that, for every $x,y\in X^m$,
	$$d(x,y)\ge t \implies d(f(x),f(y))\le\alpha d(x,y).$$

	The notion of a generalized  Matkowski contraction can be formulated for the $G$-symmetric powers.
	
	\begin{defin} An $SP^m_G$-generalized  Matkowski contraction is a  map $f \colon SP^m_GX \to X$ such that, for some nondecreasing $\varphi\colon [0,\infty) \to  [0,\infty)$ with $\varphi^{(k)}(t)\to0$ for $t >0$, it holds
		$$d(f (x),f (y)) \le \varphi(d(x,y)),\ x,y \in SP^m_GX.$$
		\end{defin}

	\begin{rem}
		If  $f \colon SP^m_GX \to X$ is an $SP^m_G$-generalized  Matkowski contraction and $H$ is a subgroup of $G$, then $f\pi_{HG}\colon SP^m_HX \to X$ is an $SP^m_G$-generalized  Matkowski contraction. In particular,  $f\pi_{G}\colon X^m \to X$ is a generalized  Matkowski contraction  of degree $m$. This easily follows from the fact that $\pi_{HG}$ and $\pi_G$ are nonexpanding.
	\end{rem}
	
	Define $\Phi \colon\exp X\to\exp X$ by the formula
	$$\Phi(A)=\cup_{i=1}^nf_i(A^m),\ A\in \exp X.$$

	In the language of category theory this formula has an interpretation analogous to the formulas used to define invariant sets, probability measures, and idempotent measures (see examples below).
	
	Actually, we can introduce the following general notion.
	
	\begin{defin}\label{d:TIFS}
			Let $(T,\eta,\zeta)$ be a monad in the category $\mathbf{Comp}$. An IFS for $\mathbb T$ is a collection of selfmaps $f_i\colon X\to X$, $i=1,\dots,n$, together with an element $a\in T(\{1,\dots,n\})$. Given $m\in T(X)$, we define $\bar f_m\colon \{1,\dots,n\}\to T(X)$ by the formula $\bar f_m(i)=T(f_i)(m)$. Finally, define $\Psi_T\colon T(X)\to T(X)$ by the formula 
		\begin{equation}\Psi_T(m)=\zeta_XT(\bar f_m)(a),\end{equation}
		$m\in T(X)$. 
	\end{defin}

	The following are examples.
	
	1) The hyperspace functor $\exp$ determines the hyperspace monad $(\exp,s,u)$, where, for every $X$, $s_X(x)=\{x\}$, $x\in X$, and $u_X\colon \exp^2X\to\exp X$ is the union map.   Choosing $a=\{1,\dots,n\} \in \exp \{1,\dots,n\}$ we obtain the Hutchinson map \cite{Hu}
	$$A\mapsto\Psi_{\exp}(A)=\cup_{i=1}^n\exp f_i(A)= \cup_{i=1}^n f_i(A), $$
	$A\in\exp X$.
	
	2) The probability measure functor $P$ determines the monad $(P,\delta,\zeta)$, where $\delta_X(x)=\delta_x$ (the Dirac measure concentrated at $x\in X$). If $a\in P(\{1,\dots,n\})$, then $a=\sum_{i=1}^n\alpha_i\delta_i$ and we obtain the map 	$$\mu\mapsto \Psi_{P}(\mu)=\cup_{i=1}^n\exp f_i(A)= \cup_{i=1}^n f_i(A), $$
	$A\in\exp X$ (see \cite{Hu}).
	
	3) The idempotent measure functor $I$ determines the monad $(I,\eta,\zeta)$ on the category $\mathbf{Comp}$ (see \cite{Z} for a description). The corresponding IFSs and their attractors (invariant idempotent measures) are considered in \cite{MZ}.

	Having in mind these examples one can formulate the natural counterpart of the notion of  $SP^m_G$-GIFS of $\ast$-measures.
\begin{defin}\label{d:inv}
	Let $G$ be a subgroup of the symmetric group $S_m$. An  $SP^m_G$-GIFS is a collection of maps $f_i\colon SP^m_G(X)\to X$, $i=1,\dots,m$, with an $\ast$-measure $\vee_{i=1}^m\alpha_i\ast\delta_{i}$.   
	
	Define $\Psi\colon M^\ast (SP^m_G(X))\to M^\ast(X)$ by the formula \begin{equation}\label{f:2}
		\Psi(\nu)=\vee_{i=1}^n\alpha_i\ast M^\ast(f_i)(\nu).\end{equation}
	
	An element $\mu\in M^\ast (SP^m_G(X))$ is called invariant if 
	$$\mu=\Psi([\mu\otimes\dots\otimes\mu]_G).$$

\end{defin}  

\begin{rem}\label{r:GH} Let $G,H$ be subgroups of the symmetric group $S_m$ and $H\subset G$. Every $G$-GIFS $f_i$, $i=1,\dots,m$, of order $m$ naturally generates an $H$-GIFS of order $m$, namely $f_i\pi_{HG}$, $i=1,\dots,m$, and any invariant element $\mu$ of the former is the invariant element of the latter. Indeed, \begin{align*}
		&\vee_{i=1}^n\alpha_i\ast M^\ast(f_i\pi_{HG})([\mu\otimes\dots\otimes\mu]_H)\\
	=	&\vee_{i=1}^n\alpha_i\ast M^\ast(f_i) M^\ast(f_i)(\pi_{HG})([\mu\otimes\dots\otimes\mu]_H)\\
	=& 	\vee_{i=1}^n\alpha_i\ast M^\ast(f_i)([\mu\otimes\dots\otimes\mu]_G)=\mu.
	\end{align*}

	\end{rem}

\begin{rem}\label{r:TIFS}
	Definition \ref{d:TIFS} can be extended over some infinite collections of functions.  For any compact metric space $X$, denote by $C(X,X)$ the set of continuous selfmaps of $X$ endowed with the topology of uniform convergence.  Let $K$ be a compact metric space and $g\colon K\to C(X,X)$ be a continuous map. 
	
	Let $(T,\eta,\zeta)$ be a monad in the category $\mathbf{Comp}$. We assume that the functor $T$ is continuous on morphisms, i.e., the map $T\colon C(X,X)\to C(T(X),T(X))$ is continuous in the topology of uniform convergence. The mentioned functors as well as many other functors satisfy this condition \cite{TZ}.

	 Let $a\in T(K)$. We are going to define a map $\Psi_{K,g,a}\colon T(X)\to T(X)$ as follows. Let $m\in T(X)$.  First, define $h\colon K\to T(X)$ by the formula $h(x)=T(g(x))(m)$, $x\in K$. It easily follows from the continuity of $T$ that $h$ is continuous. Finally, we define $$\Psi_{K,g,a}(m)=\zeta_XT(h)(a).$$

	An element $m\in T(X)$ is called invariant if $\Psi_{K,g,a}(m)=m$.

\end{rem}

For the case of simplicity, we deal with the IFSs in the  considerations from Remark \ref{r:TIFS}; the generalization is immediate as there is the definition of tensor product for monads in the category $\mathbf{Comp}$ (see \cite{TZ}).

\begin{rem}
	In the language of normal upper semicontinuous fuzzy sets  see \ref{ss:nusfs} above), the map $t\mapsto\alpha\ast t\colon\mathbb I\to\mathbb I$ is a grey level map in the sense of \cite{OS}. This implies that Definition \ref{d:inv} is a partial case of  \cite[Definition 3.11]{OS} in the case of trivial group $G$.
\end{rem}

\section{Results}
	
	\begin{theorem}\label{t:main} Let $(X,d)$ be a compact metric space and let $g_i\colon SP^m_GX\to X$, $i=1,\dots,n$, be  $SP^m_G$-generalized Matkowski contractions, where $G$ is a subgroup of $S_m$. Let $\ast$ be a triangular norm and $\vee_{i=1}^n\alpha_i\ast\delta_i\in M^\ast(\{1,\dots,n\})$.
		
		There exists a unique invariant $\ast$-measure for the obtained $SP^m_G$-GIFS.
\end{theorem}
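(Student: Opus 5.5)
We work throughout in the hyperspace representation: via $h^\ast$, a $\ast$-measure $\mu$ is identified with the set $\bar\mu\in\bar M^\ast(X)\subset\exp(X\times\mathbb I)$. In this representation the invariance equation becomes a fixed-point equation for the map
$$\bar\Psi(\bar\mu)=\bigcup_{i=1}^n \alpha_i\bar\ast\,\bar M^\ast(g_i\pi_G)(\bar\mu\bar\times\dots\bar\times\bar\mu).$$
By the lemma on products, this equals $X\times\{0\}$ together with the set of points
$$\bigl(g_i[x_1,\dots,x_m]_G,\;\alpha_i\ast t_1\ast\dots\ast t_m\bigr),\qquad (x_j,t_j)\in\bar\mu .$$
Here we use that $h^\ast$ converts $\vee$ to $\cup$ and $t\ast(\cdot)$ to $t\bar\ast(\cdot)$. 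The first step is to check that $\bar\Psi$ is well defined on $\bar M^\ast(X)$. The nontrivial point is the normality condition $A\cap(X\times\{1\})\ne\emptyset$, which holds because $\max_i\alpha_i=1$ (the $\ast$-measure $\vee\alpha_i\ast\delta_i$ must take value $1$ on $1_{\{1,\dots,n\}}$) and $1$ is the unit for $\ast$. Saturation follows from monotonicity of $\ast$, and compactness follows from continuity.

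The natural attempt to show $\bar\Psi$ is a Matkowski contraction for $d_H$ fails, because the $\mathbb I$-coordinate is not contracted. Instead we separate the two coordinates, in the spirit of the remark that we avoid metrizing the space of measures.

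\textbf{Step 1 (support).} Let $K\in\exp X$ be the unique attractor of the $SP^m_G$-GIFS on sets, $K=\bigcup_i g_i(SP^m_G K)$. This comes from Mihail--Miculescu / \cite{OS}, applied to the Matkowski contractions $g_i\pi_G$ (Remark above). We restrict to the $i$ with $\alpha_i>0$; indices with $\alpha_i=0$ contribute only level $0$. The projection $\mathrm{supp}$ of the positive part of $\bar\Psi(\bar\mu)$ is the Hutchinson image of the support of $\bar\mu$. Hence iterates of $\bar\Psi$ have supports converging to $K$, and any invariant measure has support exactly $K$.

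\textbf{Step 2 (level function).} We pass to the normal usc function $u$ with $s_X(u)=\bar\mu$. Invariance then reads
$$u(x)=\max\{\alpha_i\ast u(x_1)\ast\dots\ast u(x_m)\mid g_i[x_1,\dots,x_m]_G=x\}.$$
We construct the solution explicitly via the code space. Iterating the GIFS gives trees of depth $k$ labelled by indices $i$, each with a weight, namely the $\ast$-product of the $\alpha$'s along all nodes. By the Matkowski property, the image of $K^{m^k}$ under a depth-$k$ tree shrinks uniformly to a point as $k\to\infty$ (Mihail--Miculescu estimate). We then define $u(x)$ as the sup of weights of infinite trees whose limit point is $x$. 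To prove this $u$ is usc and satisfies the equation, we use compactness of the tree space, continuity of the coding map, and continuity of $\ast$. In particular, an infinite $\ast$-product of weights is a monotone limit, so it is usc in the tree.

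\textbf{Uniqueness.} Let $v$ be any invariant usc function. Unfolding the equation $k$ times shows that $v(x)$ equals a sup over depth-$k$ trees $\tau$ of $w(\tau)\ast\prod_{\text{leaves}} v(x_\ell)$, with leaves $x_\ell\in K$. This gives the upper bound $v\le u$, via a compactness/limit argument in tree space using the uniform shrinking. For the lower bound $v\ge u$, we use points where $v=1$ (normality) as leaves and the invariance of the set $v^{-1}(1)$.

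The main obstacle is this last uniqueness comparison, specifically passing to limits in infinite trees while controlling usc behaviour. I would first try the cleaner route of Lemma \ref{prod}-type estimates. Here one considers, for each level $c$, the sets $v^{-1}([c,1])$ and shows that they satisfy a set-GIFS inequality. Combining a Hausdorff-distance estimate in $X$ with the exactness of levels then forces equality.
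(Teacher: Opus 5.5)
Your proof is correct in substance, but it follows a different route from the paper.

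\textbf{Existence.} The paper does no coding at all. It starts from $\bar\mu_0=(X\times\{0\})\cup(A\times\mathbb I)$, where $A$ is the set attractor, and notes that $\bar\Psi(\bar\mu_0)\subset\bar\mu_0$. By monotonicity the sequence $\bar\Psi^k(\bar\mu_0)$ is decreasing, and the paper takes its intersection, using that $\bar\Psi$ commutes with decreasing intersections of compacta. That is shorter and needs no uniform shrinking of tree images. You instead write the fixed point explicitly as $u(x)=\max\{w(\tau)\mid\pi(\tau)=x\}$. Here $w$ is usc and $\pi$ is continuous on the compact tree space. Normality comes from the constant tree labelled by some $i$ with $\alpha_i=1$.

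\textbf{Uniqueness.} You obtain uniqueness by a two-sided comparison with an arbitrary invariant $v$, and this is what your approach really buys. The paper's uniqueness paragraph tries to prove $\mathrm{diam}\,\Phi^k(X)\le C/2$. That cannot hold in general, because $\Phi^k(X)\supset K$; the step that uses one common index $i$ for both $x^{(k)}$ and $y^{(k)}$ is unjustified. Moreover, the estimate is never linked back to $\bar\mu$ and $\bar\nu$. Your comparison, by contrast, actually pins down every invariant $v$.

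\textbf{Closing the comparison.} Both halves close more easily than you fear, so you can drop the level-set/Hausdorff fallback. For t-norms other than $\wedge$ the superlevel sets of $v$ are coupled across different levels anyway, so that route would not close level by level.

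For $v\ge u$, fix one $z$ with $v(z)=1$.
\begin{itemize}
\item Invariance gives $v(\tau|_k(z,\dots,z))\ge w_k(\tau|_k)\ge w(\tau)$.
\item These points tend to $\pi(\tau)$ by uniform shrinking.
\item Usc of $v$ then gives $v(\pi(\tau))\ge w(\tau)$.
\end{itemize}
Invariance of $v^{-1}(1)$ is not needed.

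For $v\le u$, take depth-$k$ trees $\tau^{(k)}$ with $x\in\tau^{(k)}(X^{m^k})$ and $w_k(\tau^{(k)})\ge v(x)$, and let $\tau$ be a limit point in tree space.
\begin{itemize}
\item For every $j$ we get $x\in\tau^{(k)}|_j(X^{m^j})=\tau|_j(X^{m^j})$ for suitable $k$, so $x=\pi(\tau)$.
\item Also $w_j(\tau|_j)=w_j(\tau^{(k)}|_j)\ge w_k(\tau^{(k)})\ge v(x)$.
\item Hence $u(x)\ge w(\tau)\ge v(x)$.
\end{itemize}

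\textbf{An assertion to remove.} Step 1 claims that every invariant measure has support exactly $K$. This is false for t-norms with zero divisors, since $\alpha_i\ast t_1\ast\dots\ast t_m$ can vanish with all factors positive. Take the \L ukasiewicz t-norm, $X=[0,1]$, $m=1$, $g_1(x)=x/2$, $g_2(x)=(x+1)/2$, $\alpha_1=1$, $\alpha_2=0.3$. The invariant function equals $1$ at $0$, $0.3$ at the points $2^{-k}$ ($k\ge1$), and $0$ elsewhere. Only the inclusion $\overline{\{v>0\}}\subset K$ holds. Nothing in your argument needs more, since the leaves may be taken anywhere in $X$.
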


\begin{proof} Since, for any subgroup $H$ of $G$, the map $\pi_{HG}$ is nonexpanding, the maps $g_i\pi_{HG}$, $i=1,\dots,n$, are also Matkowski contractions. Having in mind Remark \ref{r:GH}, we may therefore assume that $H=\{e\}$ and deal with maps $f_i=g_i\pi_{\{e\}G}$ from $X^m=SP^m_{\{e\}}$ to $X$. 
		
		Given $\mu\in M^\ast(X)$, define a sequence $\{\mu_i\}_{i=0}^\infty$ as follows: $\mu_0=\mu$ and $\mu_i=\Psi(\mu_{i-1})$, for all $i\ge1$.
	
The Hutchinson map $\Phi\colon \exp X\to\exp X$ is defined by the formula $\Phi(A)=\cup_{i=1}^nf_i(A)$, $A\in\exp X$.

\medskip

	 \textbf{Claim.} There exists $\mu\in M^\ast(X)$ such that $\bar\mu_i\supset\bar\mu_{i-1}$, for all $i\in\mathbb N$.
	 
	 \medskip
	
Indeed, let $A$ be the invariant set for $\Phi$. We proceed by induction. Let $\bar\mu=(X\times\{0\})\cup (A\times\mathbb I)$. Then, clearly, $\bar\mu_0=\bar\mu\supset \bar \mu_1$.
		
		Suppose that $\bar\mu_i\supset\bar\mu_{i-1}$, for all $i\le k$. Then $$\bar\mu_{k+1}=\bigcup_{i=1}^n\alpha_i\ast M^\ast(g_i)(\bar\mu_k\bar\times\dots\bar\times\bar\mu_k)\supset \bigcup_{i=1}^n\alpha_i\ast M^\ast(g_i)(\bar\mu_{k-1}\bar\times\dots\bar\times\bar\mu_{k-1})=\bar\mu_k.$$
	
	\medskip
	
	 Now, existence follows from the Claim. Indeed, given $\mu_0\in M^\ast (X)$ satisfying $\Psi(\bar\mu_i)\supset \bar\mu_{i-1}$ for all $i\in \mathbb N$, we see that $\nu=\lim_{i\to\infty}\mu_i$ is a required $\ast$-measure as $$\bar\Psi (\bar\nu)=\bar\Psi\left(\lim_{i\to\infty}\bar\mu_i\right)=\bar\Psi\left(\cap_{i=0}^\infty\bar\mu_i\right)=\cap_{i=0}^\infty\bar\Psi(\bar\mu_i)=\cap_{i=1}^\infty\bar\mu_i=\bar\nu.$$
		
		Uniqueness. Suppose that $\bar \mu$ and $\bar\nu$ are two invariant $\ast$-measures and $d(\bar\mu,\bar\nu)=C>0$. There exists $\alpha<1$ such that:
		$$d(x,y)\ge C/2 \implies d(f_i(x),f_i(y))\le\alpha d(x,y)$$ for all $i=1,\dots,n$. There exists $k\in\mathbb N$ such that $\alpha^k\mathrm{diam}\,X\le C/2$. 
		
		Let $X_0=X$, $X_1=\Phi(X_0)$, and $X_{i+1}=\Phi(X_i)$ for all $i\ge0$.

		We are going to show that $\mathrm{diam}(X_k)\le C/2$. Indeed, assume the  contrary, then there exist $x^{(k)}, y^{(k)}\in X_k$ such that $d(x^{(k)}, y^{(k)})> C/2$. There exist $\tilde x^{(k-1)}, \tilde y^{(k-1)}\in X_{k-1}^m$ such that $x^{(k)}=f_{i}(\tilde x^{(k-1)})$, $y^{(k)}=f_{i}(\tilde y^{(k-1)})$, for some $i$, and, since the generalized Matkowski contractions are contractions, $$C/2\le d(x^{(k)},y^{(k)})\le d(\tilde x^{(k-1)},\tilde y^{(k-1)}).$$ Therefore  $d(x^{(k)},y^{(k)})\le \alpha d(\tilde x^{(k-1)},\tilde y^{(k-1)}).$ By the definition of the metric on $X^m$, there exist $x^{(k-1)},y^{(k-1)}\in X_{k-1}$ such that $d(\tilde x^{(k-1)},\tilde y^{(k-1)})=
		d(\tilde x^{(k-1)},\tilde y^{(k-1)})$.	Thus,  $d(x^{(k)},y^{(k)})\le \alpha d( x^{(k-1)}, y^{(k-1)}).$
		
		Proceeding as above, one can find recursively $x^{(i)},y^{(i)}\in X_i$, $i=k-2,\dots,1,0$, such that $d(x^{(i+1)},y^{(i+1)})\le \alpha d( x^{(i)}, y^{(i)})$ for all $i=k-2,\dots,1,0$. We then obtain $$C/2<d(x^{(k)}, y^{(k)})\le \alpha^k d(x^{(0)}, y^{(0)}) \le \alpha^k \mathrm{diam}(X)\le C/2 $$ and we obtain a contradiction.
		
		\end{proof}
	
	\section{Remarks and open questions}
	
	 Thus, the class of invariant elements of  $G$-GIFS of order $m$ is contained in the class of invariant elements of  $H$-GIFS of order $m$. We conjecture that, in general, these classes do not coincide. 
	
	The considerations of $G$-GIFSs can be extended in two directions. First, instead of $\ast$-measures one can consider compact sets, probability measures etc. Second, one can replace the $G$-symmetric powers by another constructions known as functors of finite degree. As an example, we consider the $m$-th hypersymmetric powers $\exp_mX$, i.e. the spaces of nonempty subsets of $X$ of cardinality $\le m$ endowed with the Vietoris topology. An $\exp_m$-GIFS is then a family of maps $g_i\colon \exp_mX\to X$,  $i=1,\dots,n$, and a compact subset $A$ of $X$ is called invariant if $A=\cup_{i=1}^n\exp_mA$. Many examples of such constructions can be found in  the general theory of functors of finite degree (see, e.g., \cite{TZ}).

	The notion of place-dependent
	idempotent iterated function systems and its invariant idempotent measure considered in \cite{MO} has its natural analog in the class of $\ast$-measures.  We return to these IFSs in a subsequent publication.

\end{document}